\numberwithin{equation}{section}
\newtheorem{thm}{Theorem}[section]
\newtheorem{pro}[thm]{Proposition}
\newtheorem{lem}[thm]{Lemma}
\newtheorem{cor}[thm]{Corollary}
\theoremstyle{definition}
\theoremstyle{remark}
\numberwithin{equation}{section}
\begin{document}

\title[Homological selections and fixed-point theorems]
{Homological selections and fixed-point theorems}

\author{V. Valov}
\address{Department of Computer Science and Mathematics,
Nipissing University, 100 College Drive, P.O. Box 5002, North Bay,
ON, P1B 8L7, Canada} \email{veskov@nipissingu.ca}

\thanks{The author was partially supported by NSERC
Grant 261914-13.}

 \keywords{fixed points, homological $UV^n$ sets, homological selections}

\subjclass[2010]{Primary 54H25, 55M20; Secondary 55M10, 55M15}
\begin{abstract}
A homological selection theorem for $C$-spaces, as well as a finite-dimensional homological selection theorem is established.
We apply the finite-dimensional homological selection theorem to obtain fixed-point theorems for usco homologically $UV^n$ set-valued maps.
\end{abstract}
\maketitle\markboth{}{Homological selections}





\section{Introduction}

 Banakh and Cauty \cite[Theorem 8]{bc} provided a selection theorem for $C$-spaces, which is a homological version of the Uspenskij's selection theorem \cite[Theorem 1.3]{us}. The aim of this paper was to establish a finite-dimensional form of Banach-Cauty theorem, which is the main tool in proving homological analogues of  fixed-point theorems for  usco maps established in \cite{bo}, \cite{ggk} and \cite{vg2}.

All spaces are assumed to be completely regular. Singular homology $H_n(X;G)$, reduced in dimension 0, with a coefficient group $G$ is considered everywhere below. By default, if not explicitly stated otherwise,  $G$ is a ring with unit $\rm e$.
Following the notations from \cite{bc}, for any space $X$ let $S_k(X;G)$ be the group of all singular chains with coefficients from $G$ consisting of singular $k$-simplexes and $S(X;G)$ denote the singular complex of $X$, so $S(X;G)$ is the direct sum $\bigoplus_{k=0}^{\infty}S_k(X;G)$. The groups $S_k(X;G)$ are linked via the boundary homomorphisms $\partial:S_k(X;G)\to S_{k-1}(X;G)$.

If $\sigma:\triangle^k\to X$ is a singular $k$-simplex ($\triangle^k$ is the standard $k$-simplex), we denote by $||\sigma||$ the {\em carrier} $\sigma(\triangle^k)$ of $\sigma$. Similarly, we put $||c||=\bigcup_i||\sigma_i||$ for any chain $c\in S_k(X;G)$, where $c=\sum_ig_i\sigma_i$ is the irreducible representation of $c$.

For an open cover $\mathcal U$ of $X$ let $S(X,\mathcal U;G)$ stand for the subgroup of $S(X;G)$ generated by singular simplexes $\sigma$ with $||\sigma||\subset U$ for some $U\in\mathcal U$. A homomorphism $\varphi:S(X,\mathcal U;G)\to S(Y;G)$ is said to be a {\em chain morphism} if
$\varphi(S_k(X,\mathcal U;G))\subset S_k(Y;G)$ for all $k\geq 0$ and $\partial\circ\varphi=\varphi\circ\partial$. A point $x\in X$ is called a {\em fixed point} for a chain morphism $\varphi:S(X,\mathcal U;G)\to S(X;G)$ if for any neighborhood $V$ of $x$ in $X$ there exists a chain $c\in S(V;G)\cap S(X,\mathcal U;G)$ such that $||\varphi(c)||\cap V\neq\varnothing$.

Let $A\subset B$ be two subsets of a space $X$. We write $A\stackrel{H_m}{\hookrightarrow} B$
if the embedding $A\hookrightarrow B$ induces a trivial homomorphism $H_m(A;G)\to H_m(B;G)$.

A set-valued map $\Phi:X\to 2^Y$ is called {\em strongly lower semi-continuous} (br., strongly lsc) if for each compact subset $K\subset Y$ the set $\{x\in X: K\subset\Phi(x)\}$ is open in $X$. For example, every open-graph set-valued map $\Phi:X\to 2^Y$ is strongly lsc, see \cite[Proposition 3.2]{vg1}.

Here is our first homological selection theorem.

\begin{thm}
Let $X$ be a paracompact $C$-space, $Y$ be an arbitrary space and $\Phi_k:X\to 2^Y$, $m=0,1,..,n$, be a finite sequence of strongly lsc maps satisfying the following conditions, where $G$ is a fixed ring with unit:
\begin{itemize}
\item[(i)] $\Phi_m(x)\stackrel{H_m}{\hookrightarrow}\Phi_{m+1}(x)$ for every $m=0,..,n-1$ and every $x\in X$;
\item[(ii)] $H_{m}(\Phi_n(x);G)=0$ for all $m\geq n$ and all $x\in X$.
\end{itemize}
Then there exists an open cover $\mathcal U$ of $X$ and a chain morphism $\varphi: S(X,\mathcal U;G)\to S(Y;G)$ such that $\varphi(S(U;G))\subset S(\Phi_n(x);G)$ for every $U\in\mathcal U$ and every $x\in U$.
\end{thm}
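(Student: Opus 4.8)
The plan is to carry over to the filtered setting the architecture behind the Banakh--Cauty selection theorem \cite[Theorem 8]{bc}: to construct the cover $\mathcal U$ and the chain morphism $\varphi$ simultaneously, defining $\varphi$ by induction on the dimension of singular simplexes, where the inductive step is driven by a local homological extension lemma extracted from conditions (i) and (ii), and where the global compatibility of the pieces is supplied by the $C$-space property of $X$.

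Note first that the very definition of the relation $\embed{H_m}$ forces $\Phi_0(x)\subset\Phi_1(x)\subset\dots\subset\Phi_n(x)$ for every $x\in X$. The local lemma then reads: given $x\in X$, every singular $m$-cycle $z$ with $||z||\subset\Phi_m(x)$ and $0\le m\le n-1$ bounds an $(m+1)$-chain $w$ with $||w||\subset\Phi_{m+1}(x)$ (by (i)), and every singular $m$-cycle $z$ with $||z||\subset\Phi_n(x)$ and $m\ge n$ bounds an $(m+1)$-chain $w$ with $||w||\subset\Phi_n(x)$ (by (ii)). Since $||z||$ and $||w||$ are compact and the maps are strongly lsc, the sets $\{y:||z||\subset\Phi_m(y)\}$ and $\{y:||w||\subset\Phi_{m+1}(y)\}$ --- or $\Phi_n(y)$ --- are open neighbourhoods of $x$, so a filling chosen for $x$ is usable over a whole neighbourhood of $x$. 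It is this two-case form of the lemma, using (i) below level $n$ and (ii) to absorb all higher dimensions into $\Phi_n$, that lets a filtration of finite length $n$ play the role of the infinite towers appearing in the unrestricted $C$-space selection theorem.

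For the global construction I would fix, for each $m=0,\dots,n$, an open cover $\mathcal G_m$ of $X$ with a distinguished point $x_G\in G$ for each $G\in\mathcal G_m$, chosen fine enough --- using strong lower semicontinuity and, if necessary, shrinking in several passes --- that every singular simplex of sufficiently small dimension with carrier in $G$, and every filling the local lemma later attaches to it, has carrier inside the pertinent value of $\Phi$ over \emph{every} point of $G$, not merely over $x_G$. Since $X$ is a paracompact $C$-space, to $\mathcal G_0,\dots,\mathcal G_n$ there correspond disjoint open families $\mathcal F_0,\dots,\mathcal F_n$ with $\mathcal F_m$ refining $\mathcal G_m$ and $\bigcup_{m=0}^n\mathcal F_m$ covering $X$; let $\mathcal U$ be a locally finite open refinement of $\bigcup_m\mathcal F_m$. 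One then defines $\varphi$ on $S(X,\mathcal U;G)$ by induction on dimension: having defined it on chains of dimension $<k$ with $\partial\varphi=\varphi\partial$ and with the carrier of the $\varphi$-image of any such chain contained in a prescribed value $\Phi_\ell(y)$, $\ell\le n$ (depending on the family $\mathcal F_m$ to which the chain is subordinate and on the dimension), for all $y$ in the corresponding member, one extends $\varphi$ over each singular $k$-simplex $\sigma$ by noting that $\varphi(\partial\sigma)$ is a cycle lying in a \emph{single} value $\Phi_j(x_W)$ --- $W$ being the member of the relevant family containing $||\sigma||$, its disjointness making the choice of $W$ unambiguous --- and invoking the local lemma to obtain $\varphi(\sigma)$ with $\partial\varphi(\sigma)=\varphi(\partial\sigma)$ and carrier in $\Phi_{j'}(y)$, $j'\le n$, for all $y\in W$. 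Extending $G$-linearly over $S_k(X,\mathcal U;G)$ gives the chain morphism, and, since every value used is contained in $\Phi_n(y)$, the resulting $\varphi$ satisfies $\varphi(S(U;G))\subset S(\Phi_n(x);G)$ for every $U\in\mathcal U$ and every $x\in U$.

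The main obstacle is precisely this inductive coherence. One must assign a ``level'' to each singular simplex --- the least $m$ with a member of $\mathcal F_m$ containing its carrier, together with its dimension --- consistently along faces, so that $\varphi(\partial\sigma)$ really is a cycle \emph{in one value} $\Phi_j(x_W)$, and so that the extensions chosen over different $k$-simplexes agree upon passage to faces; the disjointness of each $\mathcal F_m$ linearizes this stage by stage. The subtler point is to secure the support condition for \emph{all} $x\in U$ rather than for a single point: this is where the careful, possibly iterated, refinement of the covers $\mathcal G_m$ relative to the fillings produced at earlier stages, again via strong lower semicontinuity, is indispensable. Once this organization is in place, commutation with $\partial$ and the support property are formal.
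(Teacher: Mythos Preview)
Your plan has a genuine gap at the $C$-space step. You write that from finitely many covers $\mathcal G_0,\dots,\mathcal G_n$ the $C$-space property yields disjoint families $\mathcal F_0,\dots,\mathcal F_n$ whose union covers $X$. This is false for general $C$-spaces: the defining property only guarantees that for an \emph{infinite} sequence of covers one obtains an infinite sequence of disjoint refining families whose union covers, with no control on how many are actually needed. Getting by with $n+1$ families is essentially the content of $\dim X\le n$ (and is exactly what the paper invokes, via \cite{gv}, in the finite-dimensional Theorem~1.2). For Theorem~1.1 the paper therefore does not stop at level $n$: it uses condition~(ii) to set $\Phi_m=\Phi_n$ for $m\ge n$, continues the inductive construction of covers $\mathcal W_m$ for \emph{all} $m\ge 0$, and only then applies the $C$-space property to this full sequence.

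There is also an architectural difference worth noting. Rather than define $\varphi$ directly on singular simplexes---which, as you acknowledge, creates the coherence problem of making $\varphi(\partial\sigma)$ land in a single $\Phi_j(x_W)$---the paper factors $\varphi=\mu\circ\nu$ through an abstract simplicial complex $K=\bigcup_m K_m$ built from the covers. The morphism $\mu:C(K;G)\to S(Y;G)$ is assembled inductively from local cone-extension data (Lemma~2.1, driven by (i)--(ii)); the morphism $\nu:S(X,\mathcal U;G)\to C(K;G)$ is then produced by a separate extension lemma (Lemma~2.2) exploiting that each relevant subcomplex $L_\alpha\subset K$ is a cone, hence acyclic. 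This decoupling removes the circularity in your description of the $\mathcal G_m$ (``fine enough for fillings attached later''): the covers and the fillings are constructed together, level by level, \emph{before} the $C$-space property is invoked, and the nesting condition~(7) in the paper is precisely what guarantees the single-value support you need.
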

Let us mention that the Banakh-Cauty result \cite[Theorem 8]{bc} is a particular case of Theorem 1.1 with $\Phi_m=\Phi_n$ for all $m$.
There is also a finite-dimensional analogue of the above theorem.
\begin{thm}
Let $X$, $Y$ and $G$ be as in Theorem $1.1$. The same conclusion holds if $\dim X\leq n$ and the sequence of strongly lsc maps $\Phi_m:X\to 2^Y$ satisfies only condition $(i)$.
\end{thm}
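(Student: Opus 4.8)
The plan is to reproduce the construction underlying Theorem~1.1, replacing the use of the $C$-space property by the classical covering behaviour of finite-dimensional paracompacta. Concretely, when $\dim X\le n$, every open cover of $X$ has a locally finite open refinement of order $\le n+1$, and, more usefully, for every finite chain of open covers $\mathcal V_0\succ\mathcal V_1\succ\cdots\succ\mathcal V_n$ there exist pairwise disjoint open families $\mathcal W_m$, with $\mathcal W_m$ refining $\mathcal V_m$ for $0\le m\le n$, whose union $\mathcal U=\bigcup_{m=0}^{n}\mathcal W_m$ covers $X$. This is exactly the length-$(n+1)$ version of the property of $C$-spaces invoked in Theorem~1.1, and it is what makes the inductive construction of the chain morphism terminate after level $n$; this is why only condition $(i)$ for $m=0,\dots,n-1$, and not condition $(ii)$, is needed. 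Observe also that $(i)$ presupposes the inclusions $\Phi_0(x)\subseteq\Phi_1(x)\subseteq\cdots\subseteq\Phi_n(x)$, so any singular chain built inside some $\Phi_m(x)$ automatically has carrier in $\Phi_n(x)$.

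With $\mathcal W_0,\dots,\mathcal W_n$ and $\mathcal U$ chosen fine enough (the precise degree of fineness being dictated by the construction), I would build the chain morphism by induction on $m=0,1,\dots,n$ in the style of obstruction theory, producing at stage $m$ a partial chain morphism defined on singular simplices that are subordinate to the first $m+1$ layers, with values in chains whose carriers lie, locally over $X$, in values of $\Phi_m$. At stage $0$, fix a section $x\mapsto y(x)\in\Phi_0(x)$ and send each sufficiently small $0$-simplex to the corresponding vertex of $Y$; the inclusion $\{y(x)\}\subset\Phi_0(x)\subseteq\Phi_n(x)$ is localized to a neighbourhood of $x$ by strong lower semicontinuity of $\Phi_n$. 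At stage $m$, after enough barycentric subdivision to make a given singular $m$-simplex $\sigma$ subordinate to the layers, the stage-$(m-1)$ data applied to the faces of $\sigma$ assembles into a singular $(m-1)$-cycle $z_\sigma$ whose carrier sits in $\Phi_{m-1}(x)$ for all $x$ in some open set; condition $(i)$ makes $z_\sigma$ null-homologous in the corresponding $\Phi_m(x)$, so one chooses a singular $m$-chain $c_\sigma$ with $\partial c_\sigma=z_\sigma$ and $\|c_\sigma\|\subset\Phi_m(x_0)$, and strong lower semicontinuity of $\Phi_m$ supplies an open set over which $\|c_\sigma\|$ remains inside the relevant $\Phi_m$-value. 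The pairwise disjointness of $\mathcal W_m$ is precisely what allows these choices to be made independently over distinct members of the $m$-th layer. Since there are no layers beyond the $n$-th, the recursion stops at $m=n$, and the accumulated data extends $G$-linearly to a chain morphism $\varphi\colon S(X,\mathcal U;G)\to S(Y;G)$ with $\|\varphi(\sigma)\|\subset\Phi_n(x)$ whenever $\|\sigma\|\subset U\in\mathcal U$ and $x\in U$.

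The step I expect to be the main obstacle is the passage from this layer-by-layer, choice-laden data to a genuinely well-defined, globally coherent chain morphism: one must fix the common refinement $\mathcal U$, the subdivision parameters, and the neighbourhoods delivered by strong lower semicontinuity together, so that $\varphi$ is independent of the amount of subdivision used (up to the standard subdivision chain homotopy), so that $\partial\circ\varphi=\varphi\circ\partial$ holds globally --- which forces the local chains $c_\sigma$ to be compatible with the simplicial face operators through the natural comparison between the identity and barycentric subdivision --- and so that each member $U\in\mathcal U$ is small enough for all the strong-lsc localizations of $\Phi_n$ needed for simplices inside $U$ to hold simultaneously. A further point, inherited from Theorem~1.1: because $Y$ is an arbitrary space there is no combinatorial structure on the target, so each extension must be carried out directly with singular chains by means of hypothesis $(i)$; this is exactly why the conclusion is a chain morphism rather than a continuous selection, and it prevents any shortcut through homotopy lifting.
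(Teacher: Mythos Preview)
Your identification of the covering ingredient is correct: the hypothesis $\dim X\le n$ supplies, for any sequence of open covers $\mathcal W_0,\dots,\mathcal W_n$, disjoint open families $\mathcal U_m$ refining $\mathcal W_m$ whose union covers $X$ (this is exactly what the paper invokes, citing \cite[Corollary~5.3]{gv}), and this replaces the $C$-space property from Theorem~1.1. The construction of $\varphi$ you sketch, however, has a genuine gap.

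You conflate two separate inductions: the induction over the layer index $m=0,\dots,n$ (which does terminate at $n$) and the induction over the \emph{degree} $k$ of singular simplices (which does not). A chain morphism $\varphi\colon S(X,\mathcal U;G)\to S(Y;G)$ must be defined on singular simplices of every degree $k\ge 0$, but your obstruction-theoretic step ``use (i) to fill the cycle $z_\sigma$ in $\Phi_m(x)$'' only works for simplices of degree $\le n$; for an $(n{+}1)$-simplex $\sigma$ you would need $H_n(\Phi_n(x);G)=0$, which is precisely condition~(ii) that you are trying to avoid. Declaring $\varphi(\sigma)=0$ in degrees $>n$ does not help either, since that forces $\varphi(\partial\sigma)=0$ for every $(n{+}1)$-simplex, and nothing guarantees this.

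The paper resolves this by factoring $\varphi=\mu\circ\nu$ through an auxiliary $n$-dimensional simplicial complex $K$ assembled from cones $L_\alpha$ indexed by the covers. Condition~(i) is used only to build $\mu\colon C(K;G)\to S(Y;G)$ via Lemma~2.1, and since $\dim K=n$ nothing is required above degree~$n$. The other factor $\nu\colon S(X,\mathcal U;G)\to C(K;G)$ is then constructed in \emph{all} degrees, layer by layer, using only that each local target $L_{\alpha_\lambda}$ is a cone and hence acyclic (Lemma~2.2); this step uses neither~(i) nor~(ii). That decoupling via $K$ is exactly what lets one drop~(ii), and your direct construction has no substitute for it. (Incidentally, the barycentric subdivision you introduce is unnecessary and only adds the coherence problems you flag: simplices in $S(X,\mathcal U;G)$ already have carrier in some $U\in\mathcal U$ by definition.)
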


Theorem 1.2 and \cite[Theorem 7]{bc} imply the following fixed point theorem for usco (upper semi-continuous and compact-valued) maps:
\begin{thm}
Let  $X$ be a paracompact space with $\dim X\leq n$, $Y$ a compact metric $AR$, $G$ a field and $\Phi:X\to 2^Y$ be a homologically $UV^{n-1}(G)$ usco map. Then for every continuous map $g:Y\to X$ there exists a point $y_0\in Y$ with $y_0\in\Phi(g(y_0))$.
\end{thm}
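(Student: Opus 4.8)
The plan is to restrict to a compact metrizable domain, approximate $\Phi$ from above by a finite sequence of strongly lsc maps meeting condition~(i) of Theorem~1.2, transport the resulting chain morphism to $Y$ along $g$, produce a fixed point of the induced self-morphism of $Y$ via \cite[Theorem 7]{bc}, and finally pass to a limit. First I would replace $X$ by the compact subspace $Z:=g(Y)$: being a continuous image of the compact metric space $Y$ it is compact metrizable, it is closed in $X$ so $\dim Z\le\dim X\le n$, the corestriction $g\colon Y\to Z$ is a continuous surjection, and $\Phi|_Z$ is again usco and homologically $UV^{n-1}(G)$; since a point $y_0$ with $y_0\in\Phi(g(y_0))$ uses only the values of $\Phi$ over $Z$, it suffices to argue over $Z$. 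Fixing a metric on $Z\times Y$, the graph $\Gamma=\{(z,y):y\in\Phi(z)\}$ is a closed, hence $G_\delta$, subset of the compact metric space $Z\times Y$, so one may pick open sets $W_k\supset\Gamma$ with $\overline{W_{k+1}}\subset W_k$ and $\bigcap_kW_k=\Gamma$; write $W_k^z=\{y:(z,y)\in W_k\}$.

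For each $k$ I would construct strongly lsc maps $\Phi^{(k)}_0\subset\dots\subset\Phi^{(k)}_n\colon Z\to 2^Y$ with $\Phi(z)\subset\Phi^{(k)}_m(z)$ for every $z$, with $\Phi^{(k)}_n(z)\subset W_k^z$, and with $\Phi^{(k)}_m(z)\stackrel{H_m}{\hookrightarrow}\Phi^{(k)}_{m+1}(z)$ for $m<n$, built downward from $\Phi^{(k)}_n(z):=W_k^z$, which is strongly lsc because its graph $W_k$ is open \cite[Proposition 3.2]{vg1}. The inductive step: given a strongly lsc $\Psi$ with $\Phi(z)\subset\Psi(z)$ for all $z$ and a degree $m\le n-1$, for each $z\in Z$ pick open $W^z$ with $\Phi(z)\subset W^z\subset\overline{W^z}\subset\Psi(z)$ and then, by the $UV^{n-1}(G)$ property of $\Phi(z)$, an open $W^z_1$ with $\Phi(z)\subset W^z_1\subset W^z$ for which $H_j(W^z_1;G)\to H_j(W^z;G)$ is trivial for all $j<n$; upper semicontinuity of $\Phi$ together with strong lower semicontinuity of $\Psi$ (applied to the compact set $\overline{W^z}$) give an open $N_z\ni z$ with $\Phi(z')\subset W^z_1$ and $W^z\subset\Psi(z')$ for all $z'\in N_z$, so $H_j(W^z_1;G)\to H_j(\Psi(z');G)$ is trivial for $j\le m$. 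Using compactness of $Z$, take a finite subcover $\{N_{z_i}\}_{i=1}^{p}$ and a closed shrinking $\{F_i\}$, and set $\Psi'(z'):=\bigcap\{W^{z_i}_1:z'\in F_i\}$. Then $\Psi'$ is open-valued, $\Phi(z')\subset\Psi'(z')\subset\Psi(z')$, it is strongly lsc since $\{z':K\subset\Psi'(z')\}=\bigcap\{Z\setminus F_i:K\not\subset W^{z_i}_1\}$ is open for every compact $K\subset Y$, and for each $z'$ the inclusion $\Psi'(z')\hookrightarrow\Psi(z')$ factors through $W^{z_i}_1\hookrightarrow\Psi(z')$ for any $i$ with $z'\in F_i$, hence is trivial on $H_m$. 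Iterating from $\Psi=\Phi^{(k)}_{m+1}$ produces the sequence. Theorem~1.2, applied on $Z$ (only condition~(i) is needed, as $\dim Z\le n$), then gives an open cover $\mathcal U_k$ of $Z$ and a chain morphism $\varphi^{(k)}\colon S(Z,\mathcal U_k;G)\to S(Y;G)$ with $\varphi^{(k)}(S(U;G))\subset S(W_k^z;G)$ for all $U\in\mathcal U_k$ and $z\in U$.

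The self-morphism $\psi^{(k)}:=\varphi^{(k)}\circ g_{\#}\colon S(Y,g^{-1}\mathcal U_k;G)\to S(Y;G)$ of the compact metric $AR$ $Y$ then has, by \cite[Theorem 7]{bc} (this is where $G$ being a field enters), a fixed point $y_k$. Given a neighbourhood $V$ of $y_k$ inside $g^{-1}(U_0)$ for some $U_0\in\mathcal U_k$ containing $g(y_k)$, there is $c\in S(V;G)\cap S(Y,g^{-1}\mathcal U_k;G)$ with $||\psi^{(k)}(c)||\cap V\ne\varnothing$; since $||c||\subset g^{-1}(U_0)$ we have $g_{\#}(c)\in S(U_0;G)$, hence $||\psi^{(k)}(c)||\subset W_k^{g(y_k)}$, and so $W_k^{g(y_k)}\cap V\ne\varnothing$. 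Letting $V$ run through a neighbourhood base of $y_k$ yields $y_k\in\overline{W_k^{g(y_k)}}$, i.e. $(g(y_k),y_k)\in\overline{W_k}$. Passing to a subsequence with $y_k\to y_0$ (possible since $Y$ is compact) we get $g(y_k)\to g(y_0)$ and $(g(y_k),y_k)\in\overline{W_k}\subset\overline{W_m}$ whenever $k>m$, so $(g(y_0),y_0)\in\bigcap_m\overline{W_m}=\Gamma$; that is, $y_0\in\Phi(g(y_0))$.

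The hard part will be the construction in the second paragraph: forcing a single fattening $\Phi^{(k)}_m$ to be simultaneously strongly lower semicontinuous, contained in the band $W_k$, and such that $H_m(\Phi^{(k)}_m(z);G)\to H_m(\Phi^{(k)}_{m+1}(z);G)$ vanishes. Upper semicontinuity of $\Phi$ pulls against strong lower semicontinuity of any neighbourhood‑type selection, and the homological triviality is fragile under the set operations needed to globalise the local choices; the device of intersecting $UV^{n-1}(G)$‑neighbourhoods over a closed shrinking of a finite cover and then routing the relevant inclusion through one of them is what lets the three conditions coexist. A subsidiary point demanding care is the chase in the third paragraph showing that $\psi^{(k)}$ actually carries the small cycles near $y_k$ into $S(W_k^{g(y_k)};G)$, so that it is $\overline{W_k}$, and not some larger set, that must contain $(g(y_k),y_k)$.
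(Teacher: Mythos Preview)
Your proof is correct, but it follows a different route from the paper's. The paper argues by contradiction: assuming $y\notin\Phi(g(y))$ for all $y$ gives an open-graph map $\varphi(x)=Y\setminus g^{-1}(x)\in\mathcal O(\Phi)$, and a single application of Proposition~3.1 (iterated $n$ times) produces one sequence $\Theta_0,\dots,\Theta_n$ satisfying condition~(i) with $\overline{\Theta_n(x)}\subset\varphi(x)$; one invocation of Theorem~1.2 and of \cite[Theorem~7]{bc} then yields a fixed point of the resulting self-morphism, which immediately contradicts $\overline{\Theta_n(g(y_0))}\cap\{y_0\}=\varnothing$. By contrast, you work directly: you shrink to the graph via $G_\delta$-bands $W_k$, build a separate sequence $\Phi^{(k)}_0,\dots,\Phi^{(k)}_n$ inside each band, obtain an approximate fixed point $y_k$ with $(g(y_k),y_k)\in\overline{W_k}$, and pass to a limit. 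What you call ``the hard part'' in your second paragraph is precisely the content of the paper's Proposition~3.1, which you have rederived in the compact-metric setting using a finite cover and closed shrinking rather than a locally finite one. The paper's path is shorter (one call to Theorem~1.2 and \cite[Theorem~7]{bc} instead of countably many, no limit step), while yours is constructive and makes the reduction to compact $Z=g(Y)$ explicit; once you have Proposition~3.1 in hand, the contradiction argument is the more economical packaging of the same ingredients.
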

The particular case of Theorem 1.3 with $X=Y$ and $g=id_X$ is also interesting.
\begin{cor}
Let  $X$ be a compact metric $AR$ with $\dim X\leq n$, $G$ a field and $\Phi:X\to 2^X$ be a homologically $UV^{n-1}(G)$ usco map. Then there exists a point $x_0\in X$ with $x_0\in\Phi(x_0)$.
\end{cor}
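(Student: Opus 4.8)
The plan is to obtain Corollary~1.4 as an immediate specialization of Theorem~1.3; no genuinely new argument is required, since the entire content already sits in Theorem~1.3 (which in turn rests on the finite-dimensional homological selection Theorem~1.2 together with \cite[Theorem 7]{bc}).

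First I would verify that the hypotheses of Theorem~1.3 are met when one takes the domain to be $X$, the target $Y$ to be $X$ as well, the coefficient group $G$ to be the given field, and $\Phi\colon X\to 2^X$ to be the given map. A compact metric space is paracompact, and by assumption $\dim X\le n$, so $X$ qualifies as the paracompact space of dimension $\le n$ demanded of the domain in Theorem~1.3. Taking $Y=X$, the target is a compact metric $AR$ by hypothesis; $G$ is a field; and $\Phi$ is a homologically $UV^{n-1}(G)$ usco map, exactly as required. Thus Theorem~1.3 is applicable.

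Next I would apply Theorem~1.3 to the continuous map $g=\mathrm{id}_X\colon Y=X\to X$, which is clearly an admissible choice. This produces a point $y_0\in Y=X$ with $y_0\in\Phi(g(y_0))=\Phi(\mathrm{id}_X(y_0))=\Phi(y_0)$. Setting $x_0:=y_0$ then gives $x_0\in\Phi(x_0)$, which is the desired conclusion.

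I do not anticipate any real obstacle: the only points to check are that the compact metric $AR$ $X$ can simultaneously serve as the paracompact finite-dimensional domain and as the compact metric $AR$ target in Theorem~1.3, and that $\mathrm{id}_X$ is a legitimate value of $g$ — both of which are immediate. One could in principle re-run the proof of Theorem~1.3 directly in the special case $X=Y$, $g=\mathrm{id}_X$, but this would merely reproduce a fragment of that argument and would not simplify anything, so deducing the statement from Theorem~1.3 is the natural route.
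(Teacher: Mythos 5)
Your proposal is correct and matches the paper exactly: Corollary~1.4 is stated there precisely as the special case of Theorem~1.3 with $X=Y$ and $g=\mathrm{id}_X$, and your verification of the hypotheses (compact metric spaces are paracompact, $\dim X\le n$, $X$ is a compact metric $AR$, $\Phi$ is homologically $UV^{n-1}(G)$ usco) is all that is needed.
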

Recall that a closed subset $A$ of a metric space $X$ is called $UV^n$ in $X$ if every neighborhood $U$ of $A$ in $X$ contains another neighborhood $V$ such that the inclusion $V\hookrightarrow U$ generates trivial homomorphisms $\pi_k(V)\to\pi_k(U)$ between the homotopy groups for all $k=0,..n$.
If considering the homology groups $H_k(.;G)$ instead of the homotopy  groups $\pi_k(V)$ and $\pi_k(U)$ (i.e. requiring
$V\stackrel{H_m}{\hookrightarrow}U$ for all $m=0,1,..,n$), then $A$ is said to be {\em homologically $UV^n(G)$ in $X$}. It follows from the universal formula coefficients that every $UV^n$-subset of $X$ is homologically $UV^n(G)$ in $X$ for all groups $G$. Moreover, following the proof of Proposition 7.1.3 from \cite{sa}, one can show that $A$ is homologically $UV^n(G)$ in a given metric $ANR$-space $X$ if and only if it is homologically $UV^n(G)$ in any metric $ANR$-space that contains $A$ as a closed set. We say that $A$ is {\em homologically $UV^\omega(G)$ in $X$} if $A$ is homologically $UV^n(G)$ for all $n\geq 0$. Because of the last two notions, it is convenient to say that $A$ is {\em homotopically $UV^n$ in $X$} instead of $A$ being $UV^n$ in $X$. We also say that a set-valued map $\Phi:X\to 2^Y$ is {\em homologically $UV^n(G)$} if all values $\Phi(x)$ are homologically $UV^n(G)$-subsets of $Y$.
\begin{thm}
Let  $X$ be a compact metric $AR$-space, $G$ a field and $\Phi:X\to 2^X$ be a homologically $UV^{\omega}(G)$ usco map. Then there exists a point $x_0\in Y$ with $x_0\in\Phi(x_0)$.
\end{thm}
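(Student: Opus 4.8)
The plan is to deduce this infinite-dimensional statement from the finite-dimensional fixed-point theorem (Theorem~\ref{C}) by approximating the compact metric $AR$ $X$ by finite polyhedra and then passing to a limit. (The conclusion should of course read $x_0\in X$.) Recall the classical fact from $ANR$-theory that a compact metric $ANR$ is $\varepsilon$-dominated by finite polyhedra: for every $n\in\mathbb N$ there exist a finite polyhedron $P_n$, of some finite dimension $d_n$, and continuous maps $\alpha_n\colon X\to P_n$, $\beta_n\colon P_n\to X$ with $d\big(\beta_n\circ\alpha_n,\mathrm{id}_X\big)<1/n$, where $d$ is a fixed metric on $X$. Such $P_n,\alpha_n,\beta_n$ are produced in the usual way, by embedding $X$ in the Hilbert cube, fixing a retraction of a neighbourhood onto $X$, taking for $P_n$ the nerve of a sufficiently fine finite cover of $X$ by open subsets of that neighbourhood, for $\alpha_n$ a canonical map into the nerve, and for $\beta_n$ the composition of a realization of the nerve with the retraction.

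Now fix $n$ and apply Theorem~\ref{C} with $P_n$ in the role of the $n$-dimensional space (with $d_n$ in the role of $n$), with $X$ — a compact metric $AR$ by hypothesis — in the role of $Y$, with the set-valued map $\Phi\circ\beta_n\colon P_n\to 2^{X}$ in the role of $\Phi$, and with $\alpha_n\colon X\to P_n$ in the role of $g$. The map $\Phi\circ\beta_n$ is usco, being the composition of the continuous single-valued map $\beta_n$ with the usco map $\Phi$, and each of its values $\Phi(\beta_n(p))$ is a value of $\Phi$, hence a homologically $UV^{\omega}(G)$-subset of $X$; in particular it is homologically $UV^{d_n-1}(G)$ in $X$. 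Since $\dim P_n\le d_n$, Theorem~\ref{C} yields a point $x_n\in X$ with $x_n\in(\Phi\circ\beta_n)\big(\alpha_n(x_n)\big)=\Phi\big((\beta_n\circ\alpha_n)(x_n)\big)$. Thus, putting $z_n=(\beta_n\circ\alpha_n)(x_n)$, we obtain $x_n\in\Phi(z_n)$ and $d(z_n,x_n)<1/n$ for every $n$.

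Finally, by compactness of $X$ pass to a subsequence with $x_{n_k}\to x_0$; then $z_{n_k}\to x_0$ as well, since $d(z_{n_k},x_{n_k})\to 0$. The graph $\{(z,y)\in X\times X: y\in\Phi(z)\}$ of the usco map $\Phi$ is closed in $X\times X$, and it contains each pair $(z_{n_k},x_{n_k})$, which converges to $(x_0,x_0)$; hence $x_0\in\Phi(x_0)$, which is the desired fixed point.

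The whole argument is essentially formal once Theorem~\ref{C} is available, so I do not expect a genuine obstacle. The only step requiring care is the first one — the existence, with the metric estimate, of the finite-polyhedral $1/n$-domination of $X$ — which is a standard piece of $ANR$-theory (cf.\ the techniques behind Proposition~7.1.3 in \cite{sa}); the remaining work is the bookkeeping of the role assignments in Theorem~\ref{C} and the elementary closed-graph property of usco maps into a Hausdorff space.
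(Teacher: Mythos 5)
Your proof is correct, and it follows the same overall scheme as the paper --- reduce to the finite-dimensional Theorem 1.3 on finite-dimensional approximations and then extract a fixed point from the approximate fixed points via compactness and the closed graph of the usco map --- but the reduction step is implemented differently. The paper embeds $X$ into the Hilbert cube $Q$ as a retract and replaces $\Phi$ by $\Phi\circ r\colon Q\to 2^Q$ (which implicitly uses the remark from the introduction that homological $UV^{\omega}(G)$ does not depend on the ambient metric $ANR$, so the values remain homologically $UV^{\omega}(G)$ in $Q$), and then applies Theorem 1.3 to $\Phi\circ h_n$ on the cubes $\mathbb I^n$ with $g=\pi_n$, exploiting that $h_n\circ\pi_n$ converges uniformly to $\mathrm{id}_Q$. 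You instead stay inside $X$ and invoke the classical $\varepsilon$-domination of a compact metric $ANR$ by finite polyhedra, applying Theorem 1.3 with $P_n$ in the role of the finite-dimensional domain, $Y=X$, the usco map $\Phi\circ\beta_n$ and $g=\alpha_n$. Your route avoids extending $\Phi$ over $Q$ and any appeal to the $ANR$-invariance of the homological $UV^{\omega}(G)$ property; its price is the polyhedral domination theorem with the metric estimate $d(\beta_n\circ\alpha_n,\mathrm{id}_X)<1/n$, which is standard but less elementary than the coordinate truncations used in the paper. All role assignments in your application of Theorem 1.3 check out ($P_n$ is paracompact of dimension $d_n$, $\Phi\circ\beta_n$ is usco with homologically $UV^{d_n-1}(G)$ values, $\alpha_n$ is continuous), the closed-graph limit argument coincides with the paper's, and you are right that the statement's ``$x_0\in Y$'' should read ``$x_0\in X$''.
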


 Theorem 1.3 was established by Gutev \cite{vg2} for usco maps with homotopically $UV^n$ values. A homotopical version of Theorem 1.5 is also known, see Corollaries 3.6 and 5.14 from \cite{ggk}, or Theorem 1.3 from \cite{vg2}. One can also show that if $X$ is a compact metric $AR$ and $\Phi:X\to 2^X$ is a homologically $UV^{\omega}(G)$ usco map, then each value $\Phi(x)$ has trivial \v{C}ech homology groups with coefficients in $G$. So, in the particular case when $G$ is the group $\mathbb Q$ of the rationals, Theorem 1.5 follows from the more general \cite[Theorem 7]{ca} treating the so-called algebraic $AR$'s. However, in the framework of usual $AR$'s Theorem 1.5 provides a very simple proof.

\section{Homological selection theorems}

In this section we prove Theorems 1.1 - 1.2. For any simplicial complex $K$ and an integer $m\geq 0$ let $K^{(m)}$ and $C_m(K;G)$ denote, respectively, the $m$-skeleton of $K$ and the group generated by the oriented $m$-simplexes of $K$ with coefficients in $G$.

We say that a chain morphism $\mu:C(K;G)\to S(A;G)$ (resp., $\mu:S(A;G)\to C(K;G)$), where $K$ is a simplicial complex and $A$ a topological space, is
{\em correct} provided $\mu(v)$ is a singular $0$-simplex in $S(A;G)$ for every vertex $v\in K^{(0)}$ (resp., $\mu(\sigma)$ is a vertex of $K$ for every singular $0$-simplex $\sigma\in S_0(A;G)$).

\begin{lem}
Suppose $\{A\}_{k=0}^{m+1}$ is a sequence of subsets of $Y$  with $A_k\stackrel{H_k}{\hookrightarrow}A_{k+1}$, $k=0,1,..,m$. Let
$L$ be a simplicial complex of dimension $m$ and $K$ be the cone of $L$. Then every correct chain morphism $\mu_m:C(L;G)\to S_m(A_m;G)$ such that $\mu_m(C(L^{(k)};G))\subset S_k(A_k;G)$ for all $k\leq m$
can be extended to a correct chain morphism
 $\mu_{m+1}:C(K;G)\to S_{m+1}(A_{m+1};G)$ satisfying the following conditions:
 \begin{itemize}
\item $\mu_{m+1}(C(K^{(k)};G))\subset S_k(A_k;G)$ for all $k=0,1,..,m+1$;
\item $\widetilde{\mu}_m\circ\partial_{m+1}=\partial_{m+1}\circ\mu_{m+1}$, where
 $\widetilde{\mu}_m=\mu_{m+1}|C(K^{(m)};G)$.
 \end{itemize}
\end{lem}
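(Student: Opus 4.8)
The plan is to extend $\mu_m$ over the cone one dimension at a time, using the hypotheses $A_k\stackrel{H_k}{\hookrightarrow}A_{k+1}$ as the device that kills the obstruction cycles that arise. Let $b$ be the cone vertex of $K=b\ast L$, so that the simplices of $K$ are the simplices of $L$, the vertex $b$, and the joins $b\ast\tau$ for $\tau$ a simplex of $L$; thus $C_k(K;G)=C_k(L;G)\oplus\bigl(b\ast C_{k-1}(L;G)\bigr)$ for $k\ge 1$, where $c\mapsto b\ast c$ denotes the cone (join-with-$b$) operator, and one has the familiar identity $\partial(b\ast c)=c-b\ast(\partial c)$ for every simplicial chain $c$ of $L$ (with the convention that $b\ast(\partial c)$ means $\varepsilon(c)\,b$ when $c$ is a $0$-chain, $\varepsilon$ the augmentation). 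On the subcomplex $C(L;G)\subset C(K;G)$ I would set $\mu_{m+1}:=\mu_m$, and I would pick $\mu_{m+1}(b)$ to be an arbitrary singular $0$-simplex with image in $A_0$ (legitimate since $A_0\ne\varnothing$: when $L\ne\varnothing$ this holds because $\mu_m$ already sends the vertices of $L$ into $A_0$, and the $A_k$ are non-empty in the applications). It then remains to define $\mu_{m+1}(b\ast\tau)$ for every simplex $\tau$ of $L$, which I would do by induction on $p=\dim\tau$, running over $0\le p\le m$.

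Fix a $p$-simplex $\tau$ and assume $\mu_{m+1}(b\ast\sigma)\in S_{q+1}(A_{q+1};G)$ has already been defined for every simplex $\sigma$ of $L$ of dimension $q<p$, subject to $\partial\,\mu_{m+1}(b\ast\sigma)=\mu_m(\sigma)-\mu_{m+1}(b\ast\partial\sigma)$. Consider the singular $p$-chain
\[
z_\tau:=\mu_m(\tau)-\mu_{m+1}(b\ast\partial\tau).
\]
Since $\mu_m(\tau)\in S_p(A_p;G)$ and, $\partial\tau$ being a combination of $(p-1)$-simplices, the inductive hypothesis puts $\mu_{m+1}(b\ast\partial\tau)$ into $S_p(A_p;G)$, we have $z_\tau\in S_p(A_p;G)$. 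A short computation --- using $\partial\mu_m=\mu_m\partial$ on $C(L;G)$ together with the inductive identity, linearly extended to $\partial\,\mu_{m+1}(b\ast c)=\mu_m(c)-\mu_{m+1}(b\ast\partial c)$ and applied to $c=\partial\tau$ --- gives $\partial z_\tau=\mu_m(\partial\tau)-\mu_m(\partial\tau)=0$. Hence $z_\tau$ is a $p$-cycle of $A_p$; for $p=0$ one notes in addition that $\varepsilon(z_\tau)=0$, since $\mu_m(\tau)$ and $\mu_{m+1}(b)$ are singular $0$-simplices, so $z_\tau$ still represents a class in reduced $H_0(A_0;G)$. As $p\le m$, the hypothesis $A_p\stackrel{H_p}{\hookrightarrow}A_{p+1}$ makes that class vanish in $A_{p+1}$, i.e.\ $z_\tau=\partial c_\tau$ for some $c_\tau\in S_{p+1}(A_{p+1};G)$; I would then set $\mu_{m+1}(b\ast\tau):=c_\tau$ and extend over $C(K;G)$ by $G$-linearity.

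With $\mu_{m+1}$ so defined, $\partial\,\mu_{m+1}(b\ast\tau)=z_\tau=\mu_{m+1}\bigl(\partial(b\ast\tau)\bigr)$ by the choice of $c_\tau$, and $\partial\mu_{m+1}=\mu_{m+1}\partial$ already holds on $C(L;G)$ and on $b$, so $\mu_{m+1}$ is a chain morphism extending $\mu_m$; it is correct since $\mu_m$ is and $\mu_{m+1}(b)$ is a singular $0$-simplex. Every $k$-simplex of $K$ is a simplex of $L$ (sent by $\mu_m$ into $S_k(A_k;G)$), or $b$ with $k=0$ (sent into $S_0(A_0;G)$), or some $b\ast\tau$ with $\dim\tau=k-1$ (sent into $S_k(A_k;G)$ by construction), whence $\mu_{m+1}$ carries $C_k(K;G)$ into $S_k(A_k;G)$ for all $k\le m+1$; this yields the first bullet, and in particular $\mu_{m+1}(C(K;G))\subset S(A_{m+1};G)$. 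The second bullet is automatic: $\partial_{m+1}$ maps $C_{m+1}(K;G)$ into $C(K^{(m)};G)$, so with $\widetilde\mu_m=\mu_{m+1}|C(K^{(m)};G)$ the equality $\widetilde\mu_m\circ\partial_{m+1}=\partial_{m+1}\circ\mu_{m+1}$ is merely the chain-morphism property in degree $m+1$. The one genuine point in the whole argument is the vanishing $\partial z_\tau=0$ --- recognizing $z_\tau$ as an honest cycle, which is exactly what lets $A_p\stackrel{H_p}{\hookrightarrow}A_{p+1}$ be invoked; the rest is the routine cone-extension bookkeeping, with slightly more care in degree $0$ owing to the reduced-homology convention.
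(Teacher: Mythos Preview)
Your proof is correct and follows essentially the same approach as the paper's: both proceed skeleton-by-skeleton over the cone $K=b\ast L$, fix an arbitrary singular $0$-simplex in $A_0$ as the image of the cone vertex, and at each stage recognize the obstruction chain as a (reduced) $p$-cycle in $A_p$ which the hypothesis $A_p\stackrel{H_p}{\hookrightarrow}A_{p+1}$ then bounds in $A_{p+1}$. Your use of the explicit cone decomposition $C_k(K;G)=C_k(L;G)\oplus\bigl(b\ast C_{k-1}(L;G)\bigr)$ and the join-boundary formula $\partial(b\ast c)=c-b\ast\partial c$ makes the bookkeeping cleaner than in the paper, but the underlying argument is the same.
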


\begin{proof}
We first extend each morphism $\mu_k=\mu_m|(C(L^{(k)};G)$ to a morphism $\widetilde{\mu}_k:C(K^{(k)};G)\to S_k(A_k;G)$ such that $\widetilde{\mu}_k\circ\partial_{k+1}=\partial_{k+1}\circ\widetilde{\mu}_{k+1}$, $k=0,1,,m-1$. To this end, denote by $v_0$ the vertex of $K$ and consider the augmentation
$\epsilon:S_0(A_1;G)\to G$ defined by $\epsilon(\sigma)=\rm e$ for all singular $0$-simplexes $\sigma\in S_0(A_1;G)$. Define $\widetilde{\mu}_0(\{v_0\})$ to be a fixed singular simplex $\sigma_0\in S_0(A_0;G)$ and $\widetilde{\mu}_0(\{v\})=\mu_0(\{v\})$ for any $v\in L^{(0)}$. Then extend $\mu_0$ to a homomorphism
 $\widetilde{\mu}_0:C(K^{(0)};G)\to S_0(A_0;G)$ by linearity.

If $\sigma=(v_1,v_2)$ is an $1$-dimensional simplex in $K$, then $\widetilde{\mu}_0(\partial_1(\sigma))=\widetilde{\mu}_0(v_2)-\widetilde{\mu}_0(v_1)$. Hence, $\epsilon(\widetilde{\mu}_0(\partial_1(\sigma)))=0$.
 Since $A_0\stackrel{H_0}{\hookrightarrow}A_{1}$, there is a singular chain $\tau_\sigma\in S_{1}(A_1;G)$ such that $\widetilde{\mu}_0(\partial_1(\sigma))=\partial_1(\tau_\sigma)$. Letting $\widetilde{\mu}_{1}(\sigma)=\tau_\sigma$ if $\sigma\in K^{(1)}\setminus L^{(1)}$ and
$\widetilde{\mu}_{1}(\sigma)=\mu_{1}(\sigma)$ if $\sigma\in L^{(1)}$, we define the homomorphism $\widetilde{\mu}_{1}$ on every simplex of $K^{(1)}$. Then extend this homomorphism  to  $\widetilde{\mu}_1:C(K^{(1)};G)\to S_1(A_1;G)$ by linearity. 

Because $A_{k-1}\stackrel{H_{k-1}}{\hookrightarrow}A_{k}$, we can repeat the above construction to obtain the homomorphisms $\widetilde{\mu}_k$ for any $k\leq m$. Then $\widetilde{\mu}_m:C(K^{(m)};G)\to S_m(A_m;G)$. Since $A_{m}\stackrel{H_{m}}{\hookrightarrow}A_{m+1}$, we can use once more the above arguments to obtain the chain morphism  $\mu_{m+1}:C(K;G)\to S_{m+1}(A_{m+1};G)$ satisfying the required conditions.
\end{proof}

\begin{lem}
Let $L$ be a simplicial complex with trivial homology groups and $A\subset B$ be a pair of spaces. Then every correct chain morphism
$\nu: S(A;G)\to C(L;G)$ can be extended to a correct chain morphism $\widetilde{\nu}:S(B;G)\to C(L;G)$.
\end{lem}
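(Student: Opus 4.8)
The plan is to extend $\nu$ by induction on the dimension of singular simplices, using the acyclicity of $L$ to fill in the chains that the boundary relation forces upon us. Recall that $S_k(B;G)$ is the free $G$-module on the set of singular $k$-simplices of $B$, and that those simplices whose carrier lies in $A$ generate the submodule $S_k(A;G)$. Consequently it suffices to specify $\widetilde\nu$ on each singular simplex of $B$ so that the boundary relation holds simplex by simplex, and then extend by $G$-linearity; since the singular simplices form a free generating set, no further coherence has to be checked.

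First I treat the simplices already handled by $\nu$: if $\sigma$ is a singular $k$-simplex with $||\sigma||\subset A$, I set $\widetilde\nu(\sigma)=\nu(\sigma)$. Every face of such a $\sigma$ again has carrier in $A$, so this prescription is automatically compatible with $\partial$, as $\nu$ is a chain morphism. Now I run the induction on $k$ over the remaining simplices. For $k=0$ I fix once and for all a vertex $v_\ast\in L^{(0)}$ (which exists, as $L$ is nonempty) and put $\widetilde\nu(\sigma)=v_\ast$ whenever $||\sigma||\not\subset A$; this keeps $\widetilde\nu$ correct in dimension $0$. Assume $\widetilde\nu$ has been constructed in all dimensions $<k$ as a correct chain morphism on those chain groups, restricting to $\nu$ there, and let $\sigma$ be a singular $k$-simplex with $||\sigma||\not\subset A$. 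Then $\widetilde\nu(\partial_k\sigma)\in C_{k-1}(L;G)$ is already defined, and $\partial_{k-1}\widetilde\nu(\partial_k\sigma)=\widetilde\nu(\partial_{k-1}\partial_k\sigma)=0$, so it is a cycle. If $k\geq 2$, then $H_{k-1}(L;G)=0$ provides $\tau_\sigma\in C_k(L;G)$ with $\partial_k\tau_\sigma=\widetilde\nu(\partial_k\sigma)$. If $k=1$, then $\widetilde\nu(\partial_1\sigma)$ is a difference of two vertices of $L$ (here correctness in dimension $0$ is used), hence lies in the kernel of the augmentation $\epsilon\colon C_0(L;G)\to G$, $\epsilon(v)={\rm e}$; since the reduced group $H_0(L;G)$ vanishes, this kernel equals $\partial_1(C_1(L;G))$, and we again obtain $\tau_\sigma\in C_1(L;G)$ with $\partial_1\tau_\sigma=\widetilde\nu(\partial_1\sigma)$. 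In both cases I declare $\widetilde\nu(\sigma)=\tau_\sigma$.

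Extending $\widetilde\nu$ to all of $S(B;G)$ by $G$-linearity yields a homomorphism with $\widetilde\nu(S_k(B;G))\subset C_k(L;G)$ for every $k$, satisfying $\partial\circ\widetilde\nu=\widetilde\nu\circ\partial$ (which holds on each generating simplex by construction), restricting to $\nu$ on $S(A;G)$, and carrying every singular $0$-simplex to a vertex of $L$; thus $\widetilde\nu$ is the required correct chain morphism.

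I do not anticipate a genuine obstacle: this is the standard ``extension over the skeleton filtration of the singular complex using acyclicity'' argument, in the same spirit as the proof of Lemma 2.1. The only points needing mild care are the compatibility between the prescribed values $\nu(\sigma)$ on the $A$-simplices and the freely chosen fillings (resolved by the observation that faces of $A$-simplices are themselves $A$-simplices, so no inconsistency can arise) and the step $k=1$, where one must route through the augmentation and invoke the vanishing of the \emph{reduced} zeroth homology of $L$, not merely that of $H_1(L;G)$.
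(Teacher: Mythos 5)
Your proof is correct and follows essentially the same route as the paper: dimension-wise induction, sending non-$A$ $0$-simplices to a fixed vertex, handling $k=1$ via the augmentation and the vanishing of the reduced $H_0(L;G)$, and filling in higher dimensions using $\ker\partial_{k-1}=\mathrm{im}\,\partial_k$ in $C(L;G)$. The only addition is your explicit remark that faces of $A$-simplices stay in $A$, which the paper leaves implicit.
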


\begin{proof}
We are going to define by induction for each $k\geq 0$ a homomorphism $\widetilde{\nu}_k:S_k(B;G)\to C_k(L;G)$ extending $\nu_k:S_k(A;G)\to C_k(L;G)$ such that $\widetilde{\nu}_k(\partial_{k+1}(c))=\partial_{k+1}(\widetilde{\nu}_{k+1}(c))$ for every singular chain $c\in S_{k+1}(B;G)$ and $k\geq 0$.
For every singular 0-simplex $\sigma\in S_0(B;G)$ we define $\widetilde{\nu}_0(\sigma)=\nu_0(\sigma)$ if $\sigma\in S_0(A;G)$ and $\widetilde{\nu}_0(\sigma)=v_0$ if   $\sigma\not\in S_0(A;G)$, where $v_0$ is a fixed vertex of $L$. Then extend this homomorphism over $S_0(B;G)$ by linearity. Because $\nu$ is correct, $\widetilde{\nu}_0(\sigma)$ is a vertex of $L$ for all singular $0$-simplexes $\sigma\in S(B;G)$.

To define $\widetilde{\nu}_1$ we
consider the augmentation $\epsilon:C_0(L;G)\to G$ defined by $\epsilon(v)=\rm e$ for all vertexes of $L$, see \cite{sp}. Thus,
$\epsilon(\widetilde{\nu}_0(\partial_1(\sigma)))=0$ for every singular 1-simplex $\sigma\in S_1(B;G)$.
Because $H_0(L;G)=0$,
$\partial_1(C_1(L;G))=\epsilon^{-1}(0)$. Therefore, for every singular simplex $\sigma\in S_1(B;G)\setminus S_1(A;G)$ there exists a chain $c_\sigma\in C_1(L;G)$ such that $\partial_1(c_\sigma)=\widetilde{\nu}_0(\partial_1(\sigma))$. We define $\widetilde{\nu}_1(\sigma)=\nu_1(\sigma)$ if $\sigma\in S_1(A;G)$ and
$\widetilde{\nu}_1(\sigma)=c_\sigma$ if $\sigma\in S_1(B;G)\setminus S_1(A;G)$, and extend $\widetilde{\nu}_1$ over $S_1(B;G)$ by linearity.

Suppose the homomorphism $\widetilde{\nu}_k:S_k(B;G)\to C_k(L;G)$ was already constructed.
Then, using that the kernel of the boundary homomorphism $\partial_k:C_k(L;G)\to C_{k-1}(L;G)$ coincides with the image $\partial_{k+1}(C_{k+1}(L;G))$, we can define $\widetilde{\nu}_{k+1}$ extending $\widetilde{\nu}_k$ and satisfying the equality $\widetilde{\nu}_k\circ\partial_{k+1}=\partial_{k+1}\circ\widetilde{\nu}_{k+1}$.
\end{proof}

\smallskip
\textit{Proof of Theorem $1.1$.} We modify the proof of \cite[Theorem 8]{bc}. By induction we are going to construct two sequences of locally finite open covers of $X$,
$\mathcal V_m=\{V_\alpha:\alpha\in\Gamma_m\}$ and $\mathcal W_m=\{W_\alpha:\alpha\in\Gamma_m\}$, $m\geq 0$, an increasing sequence $K_0\subset K_1\subset...$ of simplicial complexes and correct chain morphisms $\mu_m:C(K_m;G)\to S_m(Y;G)$ such that
\begin{itemize}
\item[(1)] $\overline{W}_\alpha\subset V_\alpha$ for all $\alpha\in\Gamma_m$, $m\geq 0$;
\item[(2)] $\dim K_m=m$;
\item[(3)] $\mu_{m+1}|C(K_{m};G)=\mu_{m}$ and $\partial_{m+1}\circ\mu_{m+1}=\widetilde{\mu}_{m}\circ\partial_{m+1}$, where
$\widetilde{\mu}_{m}=\mu_{m+1}|C(K_m^{(m)};G)$.
\end{itemize}

Moreover, for every $m$ and $\alpha\in\Gamma_m$ we shall assign a finite sub-complex $L_\alpha$ of $K_m$ and a set
$\Omega_\alpha=\bigcup_{\sigma\in L_\alpha}||\mu_m(\sigma)||$ satisfying the following conditions:
\begin{itemize}
\item[(4)] $\dim L_\alpha=m$ and $L_\alpha$ is a cone whose base is a sub-complex $M_\alpha\subset K_{m-1}$ and having a vertex $\alpha$;
\item[(5)] If $m\leq n$ and $\alpha\in\Gamma_m$, then $\Omega_\alpha\subset\Phi_m(x)$ and $\Omega_\alpha^{(k)}\subset\Phi_k(x)$ for all $k\leq m-1$ and all $x\in V_\alpha$, where $\Omega_\alpha^{(k)}=\bigcup_{\sigma\in L_\alpha}||\mu_{m}(\sigma^{(k)})||$;
\item[(6)] If $m>n$ and $\alpha\in\Gamma_m$, then $\Omega_\alpha\subset\Phi_n(x)$ for all $x\in V_\alpha$.
\end{itemize}

To start our construction, for every $x\in X$ we fix a point $y_x\in\Phi_0(x)$ and consider the set $O_x=\{x'\in X:y_x\in\Phi_0(x')\}$. Since $\Phi_0$ is strongly lsc, $O_x$ is open in $X$. Let $\mathcal V_0=\{V_\alpha:\alpha\in\Gamma_0\}$ be a locally finite open cover of $X$ refining the cover
$\{O_x:x\in X\}$, and choose $\mathcal W_0=\{W_\alpha:\alpha\in\Gamma_0\}$ to be a locally finite open cover of $X$ with $\overline{W}_\alpha\subset V_\alpha$ for all $\alpha\in\Gamma_0$. Let the complex $K_0$  be the zero-dimensional complex whose set of vertices is $\Gamma_0$. For every $\alpha\in\Gamma_0$ we set $L_\alpha=\{\alpha\}$ and choose $x_\alpha\in X$ such that $V_\alpha\subset O_{x_\alpha}$. Define $\mu_0:C(K_0;G)\to S_0(Y;G)$ to be the homomorphism assigning to each generator corresponding to $\alpha$ the singular 0-simplex $y_{x_\alpha}$, and let $\Omega_\alpha=\{y_{x_\alpha}\}.$ Obviously, $\mu_0$ is correct.

Suppose for some $m<n-1$ and all $k\leq m$ we already performed the construction satisfying conditions $(1) - (5)$. Then for every $x\in X$ choose an open neighborhood $G_x$ of $x$ meeting only finitely many elements of the cover $\bigcup_{k\leq m}\mathcal V_k$ such that $G_x\subset V_\alpha$ for all
$\alpha\in\bigcup_{k=0}^{m}\Gamma_k$ with $G_x\cap\overline W_\alpha\neq\varnothing$. Let $J(x)=\{\alpha\in\bigcup_{k=0}^{m}\Gamma_k: G_x\subset V_\alpha\}$ and $D_x^{(k)}=\bigcup\{\Omega_\alpha^{(k)}:\alpha\in J(x)\}$, $k\leq m$. Since $J(x)$ is finite, all $D_x^{(k)}$ are compact subsets of $Y$ with $D_x^{(k)}\subset D_x^{(m)}=D_x$. Moreover, condition $(5)$ implies $D_x\subset\Phi_m(x)$. Consider the finite sub-complex $M_x=\bigcup\{L_\alpha:\alpha\in J(x)\}$ of $K_m$ and the cone $L_x$ with a vertex $v_x\not\in K_m$ and a base $M_x$. Then, according to the definition of $\Omega_\alpha$ and condition $(5)$, we have $\mu_m(C(M_x^{(k)};G))\subset S_k(D_x^{(k)};G)\subset S_k(\Phi_k(x);G)$, $k\leq m$. Therefore,  we can apply Lemma 2.1 to find a correct chain morphism $\mu_x:C(L_x;G)\to S_{m+1}(\Phi_{m+1}(x);G)$ extending $\mu_m|C(M_x;G)$ such that $\mu_x(C(L_x^{(k)};G))\to S_{k}(\Phi_{k}(x);G)$ and
$\partial_{m+1}\circ\mu_{x}=(\mu_{x}|C(L_x^{(m)};G))\circ\partial_x$, where $\partial_x:C(L_x;G)\to C(L_x^{(m)};G)$ is the boundary homomorphism.
Then $\Omega_x=\bigcup_{\sigma\in L_x}||\mu_x(\sigma)||$ is a compact subset of $\Phi_{m+1}(x)$ containing $D_x$. The strong lower semi-continuity of $\Phi_{m+1}$ yields that $O^m_x=\{x'\in G_x:\Omega_x\subset\Phi_{m+1}(x')\}$ is an open neighborhood of $x$. So, there exists a locally finite open cover $\mathcal V_{m+1}=\{V_\alpha:\alpha\in\Gamma_{m+1}\}$ of $X$ refining the cover $\{O^m_x: x\in X\}$, and take a locally finite open cover $\mathcal W_{m+1}=\{W_\alpha:\alpha\in\Gamma_{m+1}\}$ satisfying condition $(1)$. Now, for every $\alpha\in\Gamma_{m+1}$ choose $x_\alpha\in X$ with
$V_\alpha\subset O^m_{x_\alpha}$ and let $L_\alpha$ be the cone with base $M_{x_\alpha}$ and vertex $\alpha$. Define $K_{m+1}$ to be the union
$K_m\cup\bigcup_{\alpha\in\Gamma_{m+1}}L_\alpha$. Identifying the cones $L_\alpha$ and $L_{x_\alpha}$, we define the correct morphism
$\mu_{m+1}:C(K_{m+1};G)\to S_{m+1}(Y;G)$ by $\mu_{m+1}|C(K_m;G)=\mu_{m}$ and $\mu_{m+1}|C(L_\alpha;G)=\mu_{x_\alpha}$. Finally, let $\Omega_\alpha=\Omega_{x_\alpha}$. It is easily seen that conditions $(1) - (5)$ are satisfied. Moreover, the definition of $G_x$ and the inclusion  $O^m_{x_\alpha}\subset G_{x_\alpha}$ yield that
\begin{itemize}
\item[(7)] For every $\beta\in\bigcup_{k=0}^{m}\Gamma_k$ and $\alpha\in\Gamma_{m+1}$ with $W_\alpha\cap W_\beta\neq\varnothing$ we have $W_\alpha\subset V_\beta$, and thus $L_\beta\subset L_\alpha$.
\end{itemize}
In this way we can perform our construction for all $m\leq n$. If we substitute $\Phi_m=\Phi_n$ for all $m\geq n$, we have also $\Phi_m(x)\stackrel{H_m}{\hookrightarrow}\Phi_{m+1}(x)$ because $H_m(\Phi_n(x);G)=0$ for all $x\in X$. Therefore, following the above arguments, we can perform for all $m$ the steps from $m$ to $m+1$ satisfying conditions $(1) - (7)$.

Let $K=\bigcup_{m=0}^{\infty}K_m$. Then the morphisms $\mu_m$ define a correct chain morphism $\mu:C(K;G)\to S(Y;G)$.
Because $X$ is a $C$-space, there exists a sequence of disjoint open families $\mathcal U_m=\{U_\lambda:\lambda\in\Lambda_m\}$, $m\geq 0$, such that each $\mathcal U_m$ refines $\mathcal W_m$ and the family $\mathcal U=\bigcup_{m=0}^{\infty}\mathcal U_m$ covers $X$.

The final step of the proof is to construct of a chain morphism from
$S(X,\mathcal U;G)$ into $C(K;G)$. To this end, let $\Lambda=\bigcup_{k=0}^{\infty}\Lambda_k$ and $\Lambda(m)=\bigcup_{k=0}^{m}\Lambda_k$. Consider also the sub-complexes $S_m=\sum_{\lambda\in\Lambda(m)}S(U_\lambda;G)$ of $S(X;G)$, $m\geq 0$, whose union is $S(X,\mathcal U;G)$. For every $\lambda\in\Lambda_m$ select an $\alpha_\lambda\in\Gamma_m$ with $U_\lambda\subset W_{\alpha_\lambda}$. We are going to construct a correct chain morphism $\nu:S(X,\mathcal U;G)\to C(K;G)$ such that
\begin{itemize}
\item[(8)] $\nu(S(U_\lambda;G))\subset C(L_{\alpha_\lambda};G)$ for all $\lambda\in\Lambda$.
\end{itemize}
For any $\lambda\in\Lambda_0$ the complex $L_{\alpha_\lambda}$ is a single point. So, we can find a chain morphism $\nu_\lambda:S(U_\lambda;G)\to C(L_{\alpha_\lambda};G)$. Since the family $\mathcal U_0$ is disjoint, $S_0$ is the direct sum of all
$S(U_\lambda;G)$, $\lambda\in\Lambda_0$. Hence, the chain morphism $\nu_0:S_0\to C(K;G)$ with $\nu_0|S(U_\lambda;G)=\nu_\lambda$ for all $\lambda\in\Lambda_0$ is well defined and $\nu_0(S(U_\lambda;G))\subset C(L_{\alpha_\lambda};G)$.

Suppose that for some $m$ we have constructed correct chain morphisms $\nu_k:S_k\to C(K;G)$, $k\leq m$, such that $\nu_{k}$ extends $\nu_{k-1}$ and $\nu_k(S(U_\lambda;G))\subset C(L_{\alpha_\lambda};G)$ for all $\lambda\in\Lambda(k)$.
Because $\mathcal U_{m+1}$ is a disjoint family, so is the family $\{S(U_\lambda;G):\lambda\in\Lambda_{m+1}\}$. Therefore, to extend $\nu_m$ over $S_{m+1}$, it suffices for every $\lambda\in\Lambda_{m+1}$ to extend $\nu_m|(S(U_\lambda;G)\cap S_m)$ over $S(U_\lambda;G)$. To this end, observe that if $\lambda\in\Lambda_{m+1}$ and $\lambda'\in\Lambda(m)$ with $U_\lambda\cap U_{\lambda'}\neq\varnothing$, then $W_{\alpha_\lambda}\cap W_{\alpha_{\lambda'}}\neq\varnothing$. Thus, according to condition $(7)$, $L_{\alpha_{\lambda'}}\subset L_{\alpha_{\lambda}}$. Consequently, by $(8)$, $\nu_m(S(U_\lambda;G)\cap S_m)\subset C(L_{\alpha_\lambda};G)$ for any $\lambda\in\Lambda_{m+1}$. Since $L_{\alpha_\lambda}$ is contractible and $\nu_m$ is correct, we can apply Lemma 2.2 (with
$A=U_\lambda\cap\bigcup_{\lambda'\in\Lambda(m)}U_{\lambda'}$ and $B=U_{\lambda}$) to find a correct chain morphism $\nu_\lambda:S(U_\lambda;G)\to C(L_{\alpha_\lambda};G)$ extending $\nu_m|S(U_\lambda;G)$. This completes the induction, so the construction of the required chain morphism
$\nu:S(X,\mathcal U;G)\to C(K;G)$ is done.

Finally, let $\varphi:S(X,\mathcal U;G)\to S(Y;G)$ be the composition $\varphi=\mu\circ\nu$. Then, according to $(7)$ and the definitions of $\Omega_\alpha$,  for every $\lambda\in\Lambda$ we have
$$\varphi(S(U_\lambda;G))\subset\mu(C(L_{\alpha_\lambda};G))\subset S(\Omega_{\alpha_\lambda};G).$$
Since $U_\lambda\subset W_{\alpha_\lambda}$, conditions $(4)$ and $(5)$ yield that $\Omega_{\alpha_\lambda}\subset\Phi_n(x)$ for all
$x\in U_\lambda$. Therefore, $\varphi(S(U_\lambda;G))$ is contained in $S(\Phi_n(x);G)$ whenever $x\in U_\lambda$. $\Box$

\smallskip
\textit{Proof of Theorem $1.2$.} Since the sequence $\{\Phi_m\}_{m=0}^{n}$ satisfies condition $(i)$ from Theorem 1.1, we can perform  the construction from the proof of Theorem 1.1 for every $m=0,1,..,n$. So, we construct the locally finite covers $\mathcal V_m=\{V_\alpha:\alpha\in\Gamma_m\}$ and $\mathcal W_m=\{W_\alpha:\alpha\in\Gamma_m\}$ of $X$, the complexes $K_0\subset K_1\subset...\subset K_n$, the sets $\Omega_\alpha$ for any $\alpha\in\bigcup_{k=0}^{n}\Gamma_k$ and the correct chain morphisms $\mu_m:C(K_m;G)\to S_m(Y;G)$ satisfying conditions $(1)-(5)$ and the particular case of condition $(7)$ with $m\leq n-1$. Since the complex $K=\bigcup_{m=0}^n K_m$ is $n$-dimensional, $K^{(m)}=\varnothing$ for all $m>n$. So, we can suppose that $\mu_m=\mu_n$ for $m\geq n$. In this way we obtains
a chain morphism $\mu: C(K;G)\to S(Y;G)$. Because $\dim X\leq n$, according to
Corollary 5.3 from \cite{gv}, for every $m=0,1,..,n$ there exists a disjoint family $\mathcal U_m=\{U_\lambda:\lambda\in\Lambda_m\}$ such that each $\mathcal U_m$ refines $\mathcal W_m$ and the family $\mathcal U=\bigcup_{m=0}^{m=n}\mathcal U_m$ covers $X$. Then, repeating the arguments from the final part of the proof of Theorem 1.1, we
construct the required chain morphism $\varphi:S(X,\mathcal U;G)\to S(Y;G)$. $\Box$

\section{Fixed-point theorems for homologically $UV^n(G)$ usco maps}

In this section we prove Theorems 1.3 and 1.5.
For a set-valued map $\Phi:X\to 2^Y$ we denote by
$\mathcal O(\Phi)$ the family of the open-graph maps $\Theta:X\to 2^Y$ such that $\Phi(x)\subset\Theta(x)$ for all $x\in X$.
Next proposition is a homological version (and its proof is a small modification) of \cite[Proposition 4.2]{vg1}.

\begin{pro}
Let $X$ be a paracompact space, $Y$ be a space and let $\Phi:X\to 2^Y$ be an usco map such that for every $x\in X$ and a neighborhood $U$ of $\Phi(x)$ there exists a neighborhood $V$ of $\Phi(x)$ with $V\stackrel{H_m}{\hookrightarrow}U$. Then for every $\varphi\in\mathcal O(\Phi)$ there exists
$\Theta\in\mathcal O(\Phi)$ such that $\Theta(x)$ is open in $Y$ and $\overline{\Theta(x})\stackrel{H_m}{\hookrightarrow}\varphi(x)$ for all $x\in X$
\end{pro}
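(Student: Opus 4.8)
The plan is to follow the scheme of \cite[Proposition 4.2]{vg1}, replacing the homotopy-theoretic estimates there by the homological condition $V\stackrel{H_m}{\hookrightarrow}U$. Fix $\varphi\in\mathcal O(\Phi)$. For each $x\in X$ we have an open neighborhood $\varphi(x)$ of the compactum $\Phi(x)$ in $Y$; applying the hypothesis (with $U=\varphi(x)$) we obtain an open $V_x$ with $\Phi(x)\subset V_x$ and $V_x\stackrel{H_m}{\hookrightarrow}\varphi(x)$. Using normality of $Y$ and compactness of $\Phi(x)$ we may shrink to get an open $W_x$ with $\Phi(x)\subset W_x\subset\overline{W_x}\subset V_x$, so that $\overline{W_x}\subset V_x$ and hence, composing on homology, $\overline{W_x}\stackrel{H_m}{\hookrightarrow}\varphi(x)$. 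By upper semi-continuity of $\Phi$ there is an open neighborhood $O_x$ of $x$ in $X$ with $\Phi(x')\subset W_x$ for all $x'\in O_x$; without loss of generality we may assume $O_x\subset\varphi^{-1}$-type neighborhoods so that $O_x$ also behaves well with respect to the open graph of $\varphi$ (more precisely, shrink $O_x$ so that $O_{x}\times W_x$ is contained in the graph of $\varphi$, which is possible since that graph is open and contains the compact set $\{x\}\times\overline{W_x}$ — here one uses a tube lemma argument).

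Next I would paracompactify: take a locally finite open refinement $\{G_i\}_{i\in I}$ of $\{O_x:x\in X\}$ with a map $i\mapsto x(i)$ such that $G_i\subset O_{x(i)}$, and define $\Theta(x)=\bigcup\{W_{x(i)}:x\in G_i\}$. Then $\Theta(x)$ is open in $Y$ as a union of open sets; it contains $\Phi(x)$ because $x\in G_i$ for at least one $i$ and then $\Phi(x)\subset W_{x(i)}$. To see that $\Theta$ has open graph one uses local finiteness: near a point $(x_0,y_0)$ of the graph only finitely many $G_i$ meet a small neighborhood of $x_0$, and for each such $i$ with $y_0\in W_{x(i)}$ the product of small neighborhoods is again in the graph, so $\Theta\in\mathcal O(\Phi)$.

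It remains to check $\overline{\Theta(x)}\stackrel{H_m}{\hookrightarrow}\varphi(x)$ for every $x$. Fix $x$; by local finiteness the set $\{i:x\in G_i\}$ is finite, say $i_1,\dots,i_r$, so $\overline{\Theta(x)}=\overline{W_{x(i_1)}\cup\cdots\cup W_{x(i_r)}}=\overline{W_{x(i_1)}}\cup\cdots\cup\overline{W_{x(i_r)}}$. For each $j$ the choice $G_{i_j}\subset O_{x(i_j)}$ together with the tube condition gives $W_{x(i_j)}\times\{x\}\subset$ graph of $\varphi$ — wait, rather $\{x\}\times W_{x(i_j)}\subset\mathrm{graph}(\varphi)$ since $x\in G_{i_j}\subset O_{x(i_j)}$ — hence $W_{x(i_j)}\subset\varphi(x)$, and even $\overline{W_{x(i_j)}}\subset V_{x(i_j)}$. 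The key point is then that the embedding $\overline{\Theta(x)}\hookrightarrow\varphi(x)$ factors through a space on which the relevant homology dies. Here I expect the main obstacle: unlike the $\pi_1$/$UV$ setting, a union of sets each inducing trivial $H_m$ need not induce trivial $H_m$. The fix, exactly as in the proof pattern for such selection-type statements, is to require the neighborhood selection $V_x\stackrel{H_m}{\hookrightarrow}\varphi(x)$ to be applied not to the individual $W_{x(i_j)}$ but, by a further application of the hypothesis, to a single neighborhood $V$ of $\Phi(x)$ that contains $\overline{W_x}$ and still satisfies $V\stackrel{H_m}{\hookrightarrow}\varphi(x)$; one arranges the construction so that $\overline{\Theta(x)}$ is contained in such a single $V$ by making the $O_x$ small enough that whenever $G_i\ni x$ one has $x(i)$ close enough to $x$ to force $W_{x(i)}\subset V_x$. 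Concretely, add to the requirement on $O_x$ that $O_x\subset O_{x'}$ and $W_{x'}\subset V_x$ for all $x'$ with $x\in O_{x'}$ — achievable by a standard bookkeeping since near $x$ only finitely many such constraints are active under local finiteness, done after passing to the refinement. With that arrangement $\overline{\Theta(x)}\subset V_x$ and the composition $\overline{\Theta(x)}\hookrightarrow V_x\stackrel{H_m}{\hookrightarrow}\varphi(x)$ gives the trivial homomorphism $H_m(\overline{\Theta(x)};G)\to H_m(\varphi(x);G)$, which is the desired conclusion. This completes the plan; the only genuinely delicate point is the bookkeeping that keeps $\overline{\Theta(x)}$ inside one good neighborhood $V_x$, and this is handled by the same device used in \cite[Proposition 4.2]{vg1}.
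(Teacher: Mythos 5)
You correctly identified the crux — a union of sets, each of which individually sits inside a neighborhood with trivial $H_m$ into $\varphi(x)$, need not itself map trivially into $H_m(\varphi(x);G)$ — but your proposed repair does not work, and this is exactly where the proof lives. The condition you want to impose (``$O_x\subset O_{x'}$ and $W_{x'}\subset V_x$ for all $x'$ with $x\in O_{x'}$'') is a simultaneous constraint on the whole uncountable family $\{O_x,V_x,W_x\}_{x\in X}$: it amounts to requiring $W_{x'}\subset\bigcap\{V_x: x\in O_{x'}\}$, an intersection over possibly infinitely many points, which need not be a neighborhood of $\Phi(x')$ at all; and the ``local finiteness'' you invoke belongs only to the refinement $\{G_i\}$, not to the family $\{O_x\}$ against which the constraint is stated. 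Shrinking the $O_x$ after the fact also changes which $x'$ satisfy $x\in O_{x'}$, so the bookkeeping is circular. In short, with $\Theta(x)$ defined as a union you have no factorization of $\overline{\Theta(x)}\hookrightarrow\varphi(x)$ through a single ``good'' neighborhood, and the homological conclusion is not established.

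The paper's proof (following Gutev's Proposition 4.2) resolves this by using an intersection rather than a union. From the open graph of $\varphi$ and compactness of $\Phi(x)$ one gets, by the tube lemma, neighborhoods $W_1(x)\ni x$ and $U(x)\supset\Phi(x)$ with $W_1(x)\times U(x)$ inside the graph, hence $U(x)\subset\varphi(x')$ for all $x'\in W_1(x)$; the hypothesis (plus a shrink) gives an open $V(x)\supset\Phi(x)$ with $\overline{V(x)}\stackrel{H_m}{\hookrightarrow}U(x)$, and upper semi-continuity gives $W(x)\subset W_1(x)$ with $\Phi(x')\subset V(x)$ for $x'\in W(x)$. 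One then takes a \emph{locally finite closed} cover $\{P_\alpha\}$ refining $\{W(x)\}$, chooses $x_\alpha$ with $P_\alpha\subset W(x_\alpha)$, and sets $\Theta(x)=\bigcap\{V(x_\alpha): x\in P_\alpha\}$, a finite intersection of open sets. Now for any single $\alpha$ with $x\in P_\alpha$ the inclusion $\overline{\Theta(x)}\hookrightarrow\varphi(x)$ factors as $\overline{\Theta(x)}\subset\overline{V(x_\alpha)}\stackrel{H_m}{\hookrightarrow}U(x_\alpha)\subset\varphi(x)$ (the last inclusion because $x\in W(x_\alpha)\subset W_1(x_\alpha)$), which kills $H_m$; containment $\Phi(x)\subset\Theta(x)$ holds because $x\in P_\alpha\subset W(x_\alpha)$ forces $\Phi(x)\subset V(x_\alpha)$ for each relevant $\alpha$, and the open-graph property of $\Theta$ uses the closedness and local finiteness of $\{P_\alpha\}$. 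If you replace your union by this intersection construction, the rest of your outline (tube lemma, shrinking closures, paracompactness) goes through; incidentally, $Y$ is only completely regular, so justify $\overline{W_x}\subset V_x$ via compactness of $\Phi(x)$ rather than normality.
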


\begin{proof}
Let $\varphi\in\mathcal O(\Phi)$. Then the graph $G(\varphi)$ is open in $X\times Y$ and contains the compact set $\{x\}\times\Phi(x)$ for every $x\in X$. So, there exist neighborhoods $W_1(x)$ and $U(x)$  of $x$ and $\Phi(x)$, respectively, with $W_1(x)\times U(x)\subset G(\varphi)$. Thus,
$\Phi(x)\subset U(x)\subset\varphi(x')$ for all $x'\in W_1(x)$. Then $\overline{V(x})\stackrel{H_m}{\hookrightarrow}U(x)$ for some open neighborhood $V(x)$ of $\Phi(x)$.
Since $\Phi$ is upper semi-continuous, we can find a neighborhood $W(x)\subset W_1(x)$ such that $x'\in W(x)$ implies $\Phi(x')\subset V(x)$.  Hence, for all $x'\in W(x)$ we have
\begin{itemize}
\item[(9)] $\Phi(x')\subset\overline{V(x})\stackrel{H_m}{\hookrightarrow}U(x)\subset\varphi(x')$.
\end{itemize}
Next, let $\gamma=\{P_\alpha:\alpha\in A\}$ be a locally finite closed cover of $X$ refining the cover $\{W(x):x\in X\}$ (recall that $X$ is paracompact), and for every $\alpha$ fix $x_\alpha\in X$ such that $P_\alpha\subset W(x_\alpha)$. For every $x\in X$ the set  $A(x)=\{\alpha\in A: x\in P_\alpha\}$ is finite, and define $\Theta(x)=\bigcap\{V(x_\alpha):\alpha\in A(x)\}$. One can show that $\Theta$ is open-graph (see the proof of \cite[Proposition 4.2]{vg1}). Moreover, since $x\in\bigcap\{W(x_\alpha):\alpha\in A(x)\}$, it follows from $(9)$ that
$$\Phi(x)\subset\overline{V(x_\alpha})\stackrel{H_m}{\hookrightarrow}U(x_\alpha)\subset\varphi(x)$$ for all $\alpha\in A(x)$. This yields    $\Phi(x)\subset\Theta(x)\subset\overline{\Theta(x})\stackrel{H_m}{\hookrightarrow}\varphi(x)$.
\end{proof}

\smallskip
\textit{Proof of Theorem $1.3$.} Let $g\colon Y\to X$ be a continuous (single-valued) map. Without loss of generality, we may assume that $g(Y)=X$. We need to show that the set-valued map $\Phi_g=\Phi\circ g:Y\to 2^Y$ has a fixed-point. Suppose this is not true. So $y\not\in\Phi_g(y)$ for all $y\in Y$, or equivalently $\Phi(x)\subset Y\setminus g^{-1}(x)$ for all $x\in X$.  Consider the set-valued map $\varphi:X\to 2^Y$, $\varphi(x)=Y\setminus g^{-1}(x)$. Then  $\Phi$ is a selection for $\varphi$ and it is easily seen that $\varphi$ has an open graph.  Because $\Phi$ is homologically
$UV^{n-1}(G)$, we can apply Proposition 3.1 to find for each $m=0,1,..,n$ a set valued map $\Theta_m:X\to 2^Y$ such that
\begin{itemize}
\item $\Phi(x)\subset\Theta_0(x)$, $x\in X$;
\item $\Theta_m(x)\stackrel{H_m}{\hookrightarrow}\Theta_{m+1}(x)$ for all $x\in X$ and $m=0,..,n-1$;
\item Each $\Theta_n(x)$ is open in $Y$ and $\overline{\Theta_{n}(x})\subset\varphi(x)$, $x\in X$.
\end{itemize}
Then, according to Theorem 1.2, there exists an open cover $\mathcal U$ of $X$ and a chain morphism $\phi:S(X,\mathcal U;G)\to S(Y;G)$ such that
$\phi(S(U;G))\subset S(\Theta_n(x);G)$ for every $U\in\mathcal U$ and every $x\in U$. Consider the open cover $\mathcal U_g=g^{-1}(\mathcal U)$ of $Y$ and the chain morphism $g_\sharp:S(Y,\mathcal U_g;G)\to S(X,\mathcal U;G)$ generated by $g$. Then $\phi_g=\phi\circ g_\sharp:S(Y,\mathcal U_g;G)\to S(Y;G)$ is a chain morphism with
\begin{itemize}
\item[(10)] $\phi_g(S(g^{-1}(U);G))\subset S(\Theta_n(g(y));G)$ for all $U\in\mathcal U{~}$ and $y\in g^{-1}(U)$.
\end{itemize}
So, we can apply the homological fixed-point theorem \cite[Theorem 7]{bc} to conclude that the chain morphism $\phi_g$ has a fixed point $y_0\in Y$. This means that for any neighborhood $W\subset Y$ of $y_0$ there is a chain $c_W\in S(W;G)\cap S(Y,\mathcal U_g;G)$ such that $||\phi_g(c_W)||\cap W\neq\varnothing$.
Choose $U_0\in\mathcal U$ with $y_0\in g^{-1}(U_0)$ and let $V\subset g^{-1}(U_0)$ be a neighborhood of $y_0$. Then $c_V\in S(V;G)\subset S(g^{-1}(U_0);G)$. Thus, we have
\begin{itemize} \item[(11)] $||\phi_g(c_V)||\cap V\neq\varnothing$ and, by $(10)$, $||\phi_g(c_V)||\subset\Theta_n(g(y_0))$.
\end{itemize}
On the other hand, since $\overline{\Theta_{n}(x_0})\subset\varphi(x_0)$, where $x_0=g(y_0)$, we can choose $V$ to be so small that $V\cap\overline{\Theta_{n}(x_0})=\varnothing$. The last relation contradicts condition $(11)$. $\Box$

\smallskip
\textit{Proof of Theorem $1.5$.} The arguments from the proof of \cite[Theorem1.3]{vg1} work in our situation. For completeness, we provide a sketch.
Since $X$ can be embedded in the Hilbert cube $Q$ as a retract, we may suppose that $\Phi:Q\to 2^Q$ is a homologically $UV^\omega(G)$ usco map. Identifying $Q$ with the product
$\prod\{\mathbb I_k:k\in\mathbb N\}$, where $\mathbb I=[0,1]$, let $\pi_n:Q\to\prod\{\mathbb I_k:k\leq n\}$ be the projection onto the cube $\mathbb I^n$ and $h_n:\mathbb I^n\to Q$ be the embedding assigning to every point $x=(x_1,...,x_n)\in\mathbb I^n$ the point $h(x)$ having the same first $n$-coordinates and all other coordinates $0$. For every $n$ consider the homologically $UV^\omega(G)$ usco map $\Phi_n:\mathbb I^n\to 2^Q$ defined by $\Phi_n(x)=\Phi(h_n(x))$. Then, according to Theorem 1.3 (with $X=\mathbb I^n$, $Y=Q$, $g=\pi_n$ and $\Phi=\Phi_n$), there is a point $x^n\in Q$ with $x^n\in\Phi_n(\pi_n(x^n))$. If $x^0\in Q$ is the limit point of a convergent subsequence of $\{x^n\}_{n\geq 1}$, one can see that $x^0\in\Phi(x^0)$. $\Box$

\section{Fixed point-theorems for homological $UV^n(G)$ and $UV^\omega(G)$ decompositions}
 In this section we provide some fixed point-theorems for homological $UV^n$ or homological $UV^\omega(G)$ decompositions of compact metric $AR$'s, where $G$ is a field.
 Our results are homological analogues of for homotopical $UV^n$ and $UV^\omega$ decompositions, see \cite[Theorems 3-4]{bo} and \cite[Theorems 7.1-7.3]{vg1}.
We follow Gutev's scheme \cite{vg1} of proofs applying our Theorem 1.3, Corollary 1.4 and Theorem 1.5 instead of their homotopical versions.

By a homological $UV^n(G)$ (resp., homological $UV^\omega(G)$) decomposition of a compactum $X$ we mean an upper semi-continuous decomposition of $X$ into compact homologically $UV^n(G)$ (resp., homologically $UV^\omega(G)$) sets. The decomposition space is denoted by $X/_\sim$ and $\pi:X\to X/_\sim$ is the quotient map.

\begin{thm}
Let $X$ be a compact metric $AR$ with $\dim X\leq n$ and $X/_\sim$ be a homological $UV^{n-1}(G)$ decomposition of $X$. Then $X/_\sim$ has the fixed-point property.
\end{thm}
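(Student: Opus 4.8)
\emph{Proof proposal.} The plan is to reduce the statement to Corollary~\ref{C} (the $X=Y$, $g=\mathrm{id}_X$ case of Theorem~1.3) by associating to a continuous self-map of $X/_\sim$ a homologically $UV^{n-1}(G)$ usco self-map of $X$. Fix a continuous map $f\colon X/_\sim\to X/_\sim$; we must produce a fixed point of $f$. Define $\Phi\colon X\to 2^X$ by $\Phi(x)=\pi^{-1}\bigl(f(\pi(x))\bigr)$. For each $x$ the image $f(\pi(x))$ is a single point of $X/_\sim$, so $\Phi(x)$ is precisely the decomposition element $\pi^{-1}(f(\pi(x)))$; in particular $\Phi(x)$ is a compact subset of $X$ and, by the standing hypothesis on the decomposition, it is homologically $UV^{n-1}(G)$ in $X$. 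Since $X$ is a compact metric $ANR$, by the remark in the Introduction this is the same as being a homologically $UV^{n-1}(G)$ subset of $X$ in the sense required by Corollary~\ref{C}.

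Next I would check that $\Phi$ is an usco map. As the decomposition is upper semi-continuous and $X$ is a compactum, the quotient map $\pi$ is closed, and hence $\pi^{-1}$, regarded as a set-valued map $X/_\sim\to 2^X$, is usco. The map $f\circ\pi\colon X\to X/_\sim$ is continuous, and $\Phi$ is the composition $\pi^{-1}\circ(f\circ\pi)$; a composition of an usco map with a continuous map is usco. Therefore $\Phi\colon X\to 2^X$ is an usco map all of whose values are homologically $UV^{n-1}(G)$ subsets of $X$, i.e. $\Phi$ is a homologically $UV^{n-1}(G)$ usco map on the compact metric $AR$ $X$ with $\dim X\leq n$.

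Now Corollary~\ref{C} applies and gives a point $x_0\in X$ with $x_0\in\Phi(x_0)=\pi^{-1}\bigl(f(\pi(x_0))\bigr)$, which by definition of $\pi^{-1}$ means $\pi(x_0)=f(\pi(x_0))$; thus $\pi(x_0)$ is a fixed point of $f$. As $f$ was an arbitrary continuous self-map of $X/_\sim$, the decomposition space has the fixed-point property. The argument is a bookkeeping reduction, so I do not expect a serious obstacle; the only points requiring care are the standard facts that an upper semi-continuous decomposition of a compactum yields a closed quotient map and that $\pi^{-1}\circ(f\circ\pi)$ inherits upper semi-continuity and compact-valuedness, together with the passage (justified above via the $ANR$ remark) from ``homologically $UV^{n-1}(G)$ in $X$'' to the hypothesis of Corollary~\ref{C}. $\Box$
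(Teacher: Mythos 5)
Your proposal is correct and is essentially the paper's own proof: the paper also sets $\Phi:=\pi^{-1}\circ f\circ\pi$, notes it is an usco homologically $UV^{n-1}(G)$ map, and applies Corollary~1.4 to get $x_0\in\Phi(x_0)$, hence $f(\pi(x_0))=\pi(x_0)$. Your extra verifications (closedness of $\pi$, composition preserving usco, the $ANR$ remark) are just the details the paper leaves implicit.
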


\begin{proof} For any map $f\colon X/_\sim\to X/_\sim$ the set-valued map $\Phi:=\pi^{-1}\circ f\circ\pi:X\to 2^X$ is usc and homologically $UV^{n-1}(G)$. Then, by Corollary 1.4, $x_0\in\Phi(x_0)$ for some $x_0\in X$. Hence, $f(\pi(x_0))=\pi(x_0)$.
\end{proof}

\begin{thm}
Let $X$ be a compact metric $AR$ and $X/_\sim$ be a homological $UV^{n-1}(G)$ decomposition of $X$ with $\dim X/_\sim\leq n$. Then $X/_\sim$ has the fixed-point property.
\end{thm}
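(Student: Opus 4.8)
The plan is to deduce this from Theorem~1.3 in essentially the same way that Theorem~4.1 was deduced from Corollary~1.4, but to arrange the relevant set-valued map so that the finite-dimensionality hypothesis is carried by $X/_\sim$ while the $AR$ hypothesis is carried by $X$. This is exactly what Theorem~1.3 permits, since there the compact metric $AR$ assumption sits on the range of $\Phi$ and the dimension assumption sits on its domain; in Theorem~4.1 both had to coincide with a single space, which forced $\dim X\le n$. Concretely, we transcribe Gutev's proof of the homotopical analogue, substituting Theorem~1.3 for its homotopical counterpart.

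Fix a continuous map $f\colon X/_\sim\to X/_\sim$; the goal is a point $z\in X/_\sim$ with $f(z)=z$. Since the decomposition is upper semi-continuous and $X$ is a compact metric space, $X/_\sim$ is again compact metrizable, hence paracompact, and the fibre map $\pi^{-1}\colon X/_\sim\to 2^X$ is usco, its values being exactly the compact homologically $UV^{n-1}(G)$ subsets of $X$ prescribed by the decomposition. Therefore $\Phi:=\pi^{-1}\circ f\colon X/_\sim\to 2^X$, given by $\Phi(z)=\pi^{-1}(f(z))$, is a homologically $UV^{n-1}(G)$ usco map whose domain $X/_\sim$ is paracompact with $\dim X/_\sim\le n$ and whose range $X$ is a compact metric $AR$. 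Applying Theorem~1.3 --- with the compact metric $AR$ there taken to be $X$, the paracompact $n$-dimensional space there taken to be $X/_\sim$, the set-valued map taken to be $\Phi$, and the single-valued map taken to be $\pi\colon X\to X/_\sim$ --- we obtain a point $y_0\in X$ with $y_0\in\Phi(\pi(y_0))=\pi^{-1}\bigl(f(\pi(y_0))\bigr)$. Applying $\pi$ to this membership yields $\pi(y_0)=f(\pi(y_0))$, so $z:=\pi(y_0)$ is the desired fixed point of $f$.

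The argument is purely formal once the two structural facts about the quotient are available, and these are the only points needing care. First, an upper semi-continuous decomposition of a compact metric space into compacta has a compact metrizable quotient; this is standard, $\pi$ being a perfect map. Second, $\pi^{-1}$ must be genuinely usco with homologically $UV^{n-1}(G)$ values --- but this is immediate from the definitions set in Section~4: upper semi-continuity of the decomposition is precisely upper semi-continuity of $\pi^{-1}$, and a homological $UV^{n-1}(G)$ decomposition was defined so that the fibres are compact homologically $UV^{n-1}(G)$ subsets of $X$. Thus there is no genuine obstacle beyond verifying that the hypotheses of Theorem~1.3 hold in the configuration above.
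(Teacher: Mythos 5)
Your proposal is correct and follows essentially the same route as the paper: the paper also takes $\Phi:=\pi^{-1}\circ f\colon X/_\sim\to 2^X$ and applies Theorem~1.3 with $g=\pi$ to get $x_0\in\Phi(\pi(x_0))$, hence $f(\pi(x_0))=\pi(x_0)$. Your additional verifications (metrizability of the quotient and usco-ness of $\pi^{-1}$ with homologically $UV^{n-1}(G)$ values) are exactly the routine points the paper leaves implicit.
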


\begin{proof} For any map $f\colon X/_\sim\to X/_\sim$ consider the set-valued map $\Phi:=\pi^{-1}\circ f:X/_\sim\to 2^X$ and apply Theorem 1.3 to find a point $x_0\in X$ with $x_0\in\Phi(\pi(x_0))$. The last equality implies $f(\pi(x_0))=\pi(x_0)$.
\end{proof}

\begin{thm}
Let $X$ be a compact metric $AR$ and $X/_\sim$ be a homological $UV^{\omega}(G)$ decomposition of $X$. Then $X/_\sim$ has the fixed-point property.
\end{thm}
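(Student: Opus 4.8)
The plan is to reduce Theorem~4.3 to Theorem~1.5 in exactly the same way that Theorems~4.1 and 4.2 were reduced to Corollary~1.4 and Theorem~1.3. Fix an arbitrary continuous self-map $f\colon X/_\sim\to X/_\sim$. I would associate to it the composite set-valued map
\[
\Phi:=\pi^{-1}\circ f\circ\pi\colon X\to 2^X,
\]
so that $\Phi(x)=\pi^{-1}\bigl(f(\pi(x))\bigr)$ is precisely one element of the decomposition for every $x\in X$.

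First I would verify that $\Phi$ is a homologically $UV^{\omega}(G)$ usco map of $X$ into itself. Upper semi-continuity is immediate: $\pi$ and $f$ are continuous and $\pi^{-1}$ is upper semi-continuous because the decomposition is upper semi-continuous by hypothesis, so the composite $\Phi$ is usc; each value $\Phi(x)$ is a single decomposition element, hence compact; and, again by the definition of a homological $UV^{\omega}(G)$ decomposition, each such element is a compact homologically $UV^{\omega}(G)$ subset of $X$. Since $X$ is a compact metric $AR$, Theorem~1.5 then applies to $\Phi$ and yields a point $x_0\in X$ with $x_0\in\Phi(x_0)=\pi^{-1}\bigl(f(\pi(x_0))\bigr)$.

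Finally I would translate this back to $X/_\sim$: the relation $x_0\in\pi^{-1}\bigl(f(\pi(x_0))\bigr)$ says exactly that $\pi(x_0)=f(\pi(x_0))$, so $\pi(x_0)$ is a fixed point of $f$; as $f$ was arbitrary, $X/_\sim$ has the fixed-point property. I do not expect a genuine obstacle here — all the analytic work has already been absorbed into Theorem~1.5. The only point that needs any care is checking that $\Phi$ is usco and that it inherits the homological $UV^{\omega}(G)$ property of the fibers, and both follow directly from the definitions; this is precisely why the statement is placed at the end of the chain, immediately after Theorem~1.5.
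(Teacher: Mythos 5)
Your proposal is correct and follows essentially the same route as the paper: Theorem 4.3 is proved there by repeating the argument of Theorem 4.1 (take $\Phi:=\pi^{-1}\circ f\circ\pi$, note it is usco with homologically $UV^{\omega}(G)$ values, and apply Theorem 1.5 to get $x_0\in\Phi(x_0)$, hence $f(\pi(x_0))=\pi(x_0)$). Your verification of the usco and $UV^{\omega}(G)$ properties is exactly the routine check the paper leaves implicit.
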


\begin{proof} We repeat the proof of Theorem 4.1 using now Theorem 1.5 instead of Corollary 1.4.
\end{proof}

\textbf{Acknowledgments.} The author would like to express his gratitude to P. Semenov and S. Bogaty\v{i} for their helpful comments. The author also thanks the referee for his/her careful reading and suggesting some improvements of the paper.

\end{document}